\DeclareFontFamily{U}{euf}{}
\DeclareFontShape{U}{euf}{m}{n}{%
  <5><6><7><8><9>gen*eufm%
  <10><10.95><12><14.4><17.28><20.74><24.88>eufm10%
  }{}
\DeclareFontShape{U}{euf}{b}{n}{%
  <5><6><7><8><9>gen*eufb%
  <10><10.95><12><14.4><17.28><20.74><24.88>eufb10%
  }{}
\DeclareFontFamily{U}{msb}{}
\DeclareFontShape{U}{msb}{m}{n}{%
  <5><6><7><8><9>gen*msbm%
  <10><10.95><12><14.4><17.28><20.74><24.88>msbm10%
  }{}
\DeclareFontFamily{U}{msa}{}
\DeclareFontShape{U}{msa}{m}{n}{%
  <5><6><7><8><9>gen*msam%
  <10><10.95><12><14.4><17.28><20.74><24.88>msam10%
  }{}
\newtheorem{theorem}{Theorem}[section]
\newtheorem{lemma}[theorem]{Lemma}
\newtheorem{corollary}[theorem]{Corollary}
\theoremstyle{definition}
\newtheorem{remark}[theorem]{Remark}
\numberwithin{equation}{section} \frenchspacing
\begin{document}

\title[]
{Asymptotics of generalized partial theta functions with a Dirichlet character}



\author{Su Hu}
\address{Department of Mathematics, South China University of Technology, Guangzhou, Guangdong 510640, China}
\email{mahusu@scut.edu.cn}

\author{Min-Soo Kim}
\address{Department of Mathematics Education, Kyungnam University, Changwon, Gyeongnam 51767, Republic of Korea}
\email{mskim@kyungnam.ac.kr}



\subjclass[2010]{41A60, 11F27, 11B68}
\keywords{Partial theta function, Asymptotic expansion, Character analogues of the Euler-Maclaurin summation formula}

\begin{abstract}
In this paper, we prove asymptotic expansions of generalized  partial theta
functions with a nonprincipal Dirichlet character and relate these expansions to certain $L$-series. \end{abstract}

\maketitle

\section{Introduction}
\subsection{Background} 
The Jacobi theta function is defined by
$$\theta_{3}(u;q):=1+2\sum_{n=1}^{\infty} q^{n^2}\cos(2nu)=\sum_{n=-\infty}^{\infty}x^{n}q^{n^2},$$
where $u,q\in\mathbb{C}, 0<|q|<1$ and $x=e^{2\pi iu}$.
In the lost notebook, Ramanujan \cite{Ramanujan} presented several identities for functions  which are similar with
the Jacobi theta function. These functions have the form 
$$\sum_{n=0}^{\infty}x^{n}q^{n^2}$$ or $$\prod_{k=1}^{n}(1+xq^{k})(1+x^{-1}q^{k}),$$
which are named partial theta functions by Andrews in \cite{Andrews}.

Due to many predecessors'
efforts, the theory of partial theta functions has been deeply connected with many interesting topics in modern mathematics, such as the basic hypergeometric function theory \cite{Andrews,Andrews2}, the Rogers-Ramanujan continued fraction \cite{Andrews3},
the $q$-hypergeometric series \cite{BFR}, half-integral weight Eichler integrals and quantum modular forms \cite{BR}, Jacobi forms and the regularized characters of singlet algebra modules in the representation theory of vertex algebras \cite{BFM}, mock modular forms and quantum modular forms \cite{FOR, Kimport}, Hardy-Petrovitch-Hutchinson's problem in classical function theory \cite{KS}, quantum invariants of $3$-manifolds \cite{LZ}, Appell-Lerch sums \cite{Mortenson}, the generalized triple product identity \cite{Warnaar1}, higher rank partial theta functions and the representation theory \cite{CM}. For a 
complete history on partial theta functions and  their development, we refer to a recent survey article by Ole Warnaar~\cite{Warnaar}.

Let  $\theta>0,b$ real. In 2011, Berndt and Kim \cite{BK} introduced the following partial theta functions:
\begin{equation}G_1(\theta)=2\sum_{n=0}^\infty(-1)^ne^{-(n+b/2)^2\theta}\end{equation}
and   \begin{equation} G_2(\theta)=2\sum_{n=0}^\infty e^{-(n+b/2)^2\theta}. \end{equation}
They also obtained asymptotic expansions for $G_{1}(\theta)$ and $G_{2}(\theta)$ involving Bernoulli and Hermite polynomials (see \cite[Theorems 1.1 and 3.4]{BK}).
Their works were inspired by the following asymptotic expansion for the partial theta function in Ramanujan's second notebook \cite[p. 324]{Ramanujan}
\begin{equation}
\begin{aligned}
2\sum_{n=0}^{\infty}(-1)^{n}q^{n^{2}+n}&=2\sum_{n=0}^{\infty}(-1)^{n}\Big(\frac{1-t}{1+t}\Big)^{n^{2}+n} \\
&\sim 1+t+t^{2}+2t^{3}+5t^{4}+\cdots,
\end{aligned}
\end{equation}
where $q=\frac{1-t}{1+t}\to 1^{-}$, or $t\to 0^{+}$. As remarked by Berndt and Kim~\cite{BK}, the above asymptotic expansion is very interesting since there is no a priori reason to believe that the coefficients (in the variable $t$) are positive integers.
This phenomenon has now been well understood, see the work of Galway \cite{Galway}, of Stanley \cite{Stanley}, of Bringmann--Folsom \cite{BF}, etc.

During the recent years, there are also several other works concerning the asymptotic expansions of partial theta functions, see \cite{BF, BR, BFM, FOR, KS, Kimport, KKS, Mao}.

Let  $\theta>0,c$ real, and $r\in\mathbb N$. In 2018, McIntosh \cite{Mc} proved asymptotic expansions for  general sums
$$\sum_{n=0}^\infty e^{-(n+c)^r\theta}\quad\text{and}\quad\sum_{n=0}^\infty(-1)^n e^{-(n+c)^r\theta}.$$

Let  $\theta>0,b$ real, and $r\in\mathbb N$. In this paper, we shall  consider the following generalized partial theta functions with a Dirichlet character $\chi$:
\begin{equation}\label{sums-theta-1}
G_{1,\chi}(\theta,b,r)=r\sum_{n=0}^\infty(-1)^n\chi(n)e^{-(n+b/r)^r\theta}
\end{equation}
and
\begin{equation}\label{sums-theta-2}
G_{2,\chi}(\theta,b,r)=r\sum_{n=0}^\infty\chi(n)e^{-(n+b/r)^r\theta}.
\end{equation}
In the case $r=2$ and $\chi=\chi^0$ (the principal character, i.e., its conductor $f_\chi=1$ and $\chi(m)=1$ for all integers $m$, see \cite[p. 4]{Iwasawa}), (\ref{sums-theta-1}) and (\ref{sums-theta-2})
are Berndt and Kim's partial theta functions $G_1(\theta)$ and $G_2(\theta)$, respectively.  

By applying the character analogue of the Euler-Maclaurin summation formula, we obtain asymptotic expansions of (\ref{sums-theta-1}) and (\ref{sums-theta-2}) in terms of the generalized Bernoulli and Euler polynomials (see Theorems \ref{thm-partial theta fts} and \ref{thm2} below). Inspired by the works of Lawrence and Zagier~\cite{LZ}, we also show a connection between certain $L$-series and our asymptotic expansions (see Theorem~\ref{res-L-asy} below).
\subsection{The generalized Bernoulli and Euler polynomials} In this subsection, we shall review the definitions and some basic properties on the generalized Bernoulli and Euler polynomials, which play fundamental roles in the derivations of our asymptotic expansions.
 
The Bernoulli polynomials $B_n(x)$ are defined by the generating function
\begin{equation}\label{def-B-pol}
\frac{te^{tx}}{e^t-1}=\sum_{n=0}^\infty B_n(x)\frac{t^n}{n!}\quad(|t|<2\pi)
\end{equation}
and $B_n=B_n(0)$ are the Bernoulli numbers with $B_0=1,B_1=-1/2$ and $B_{2n+1}=B_{2n-1}(1/2)=0$ for $n\geq1.$

Suppose that $\chi$ is a primitive character modulo $f_\chi$ and $\bar\chi(n): =\overline{\chi(n)}$ is its complex conjugate.
The generalized Bernoulli polynomials $B_{n,\chi}(x)$
are defined by the following generating function (\cite[Proposition 6.2]{Ber75})
\begin{equation}\label{def-g-B-pol}
\sum_{m=0}^{f_\chi-1}\frac{\bar\chi(m)te^{(m+x)t}}{e^{f_\chi t}-1}=\sum_{n=0}^\infty B_{n,\chi}(x)\frac{t^n}{n!}\quad(|t|<2\pi/f_\chi)
\end{equation}
and $B_{n,\chi}=B_{n,\chi}(0)$ are the generalized Bernoulli numbers.
In particular, if $\chi^0$ is the principal character, i.e., if $f_\chi=1$ and $\chi(m)=1$ for all integers $m$
then $B_{n,\chi^0}(x)=B_{n}(x)$ for $n\geq0$ and $B_{0,\chi}(x)=0$ for $\chi\neq\chi^0.$

The generalized Bernoulli functions $\overline B_{n,\chi}(x)$ are functions with period $f_\chi.$  They are defined by
(\cite[Theorem 3.1]{Ber75})
\begin{equation}\label{def-B-fts}
\overline B_{n,\chi}(x)=f_\chi^{n-1}\sum_{m=0}^{f_\chi-1}\bar\chi(m)\overline B_n\left(\frac{m+x}{f_\chi}\right), \quad n\geq1
\end{equation}
for all real $x.$
Here we recall the following properties for the generalized Bernoulli functions which will be needed in the sequel (see \cite[(9), (10) and (11)]{DC})
\begin{equation}\label{prop-3}
\begin{aligned}
&\frac{d}{dx} B_{n}(x)=nB_{n-1}(x) \quad \text{and}\quad \frac{d}{dx} B_{n,\chi}(x)=nB_{n-1,\chi}(x),\quad n\geq1, \\
&\frac{d}{dx} \overline B_{n,\chi}(x)=n\overline B_{n-1,\chi}(x),\;n\geq2
\quad\text{and}\quad \overline B_{n,\chi}(f_\chi)=\overline B_{n,\chi}(0)=B_{n,\chi}, \\
&B_{n,\chi}(-x)=(-1)^n\chi(-1)B_{n,\chi}(x),\; n\geq0.
\end{aligned}
\end{equation}

The Euler polynomials $E_n(x)$ are defined by the generating function
\begin{equation}\label{def-E-pol}
\frac{2e^{tx}}{e^t+1}=\sum_{n=0}^\infty E_n(x)\frac{t^n}{n!}\quad(|t|<\pi)
\end{equation}
(see \cite{Sun}).
Suppose that $\chi$ is a primitive character modulo $f_\chi$ and  $f_\chi>1$ is odd. The generalized Euler polynomials $E_{n,\chi}(x)$
are defined by
\begin{equation}\label{def-E-poly}
\sum_{m=0}^{f_\chi-1}\frac{2(-1)^m\bar\chi(m)e^{(m+x)t}}{e^{f_\chi t}+1}=\sum_{n=0}^\infty E_{n,\chi}(x)\frac{t^n}{n!}\quad(|t|<\pi/f_\chi).
\end{equation}
 In particular, $E_{n,\chi}=E_{n,\chi}(0)$ are the generalized Euler numbers.
If $\chi^0$ is the principal character,
then $E_{n,\chi^0}(x)=E_{n}(x)$ for $n\geq0.$
For $0\leq x<1,$ $\overline E_{n,\chi}(x)$ denotes the character Euler function, with odd period $f_\chi,$ defined by Can and Da\u{g}l{\i} \cite{CanDa}.
Recall that (see \cite[(2.4) and (3.5)]{CanDa})
\begin{equation}\label{e-ft-rab}
\overline E_{n,\chi}(x)=f_\chi^{n}\sum_{m=0}^{f_\chi-1}(-1)^m\bar\chi(m)\overline E_n\left(\frac{m+x}{f_\chi}\right), \quad n\geq0
\end{equation}
and \begin{equation}\label{b-e-rel}
2^{n+1}\bar\chi(2)\overline B_{n+1,\chi}\left(\frac x2\right)-\overline B_{n+1,\chi}(x)=-\frac{n+1}2\overline E_{n,\chi}(x).
\end{equation}
\subsection{Main results} We now state our first main theorem which is an analogue of  \cite[Theorem 1.1]{Mao} and \cite[Theorem 1]{Mc}.

\begin{theorem}\label{thm-partial theta fts}
Let $G_{2,\chi}(\theta,b,r)$ be defined in (\ref{sums-theta-2}) and let $\chi$ be a nonprincipal character.
If $b\geq 0$, then for any nonnegative integer $N,$ we have the following asymptotic expansion
$$G_{2,\chi}(\theta,b,r)=
-r\sum_{n=0}^{N}(-1)^n\frac{B_{rn+1,\bar\chi}\left(\frac br\right)}{(rn+1)n!}
\theta^n+O(\theta^{N+1})$$  as $\theta\to0^+$.
\end{theorem}

We also have the following result.

\begin{theorem}\label{thm2}
Let $G_{1,\chi}(\theta,b,r)$ be defined in (\ref{sums-theta-1})
and let $\chi$ be a nonprincipal character. Suppose that the conductor $f_\chi$ of $\chi$ is odd.
If $0\leq b< r$, then for any nonnegative integer $N,$ we have the following asymptotic expansion
 $$G_{1,\chi}(\theta,b,r)=\frac r2\sum_{n=0}^{N}(-1)^{n}\frac{E_{rn,\bar\chi}\left(\frac br\right)}{n!}\theta^n+O(\theta^{N+1})$$
 as $\theta\to0^+.$
\end{theorem}

\begin{remark}
As pointed out by one of referees, in \cite{BR}, the authors considered a family of quantum modular forms of weight 3/2 coming from partial theta functions twisted with a Dirichlet character. Their companion is given by certain Eichler's integrals of weight 1/2 cusp forms. As we know the space of these forms is spanned by theta functions with a character (Serre--Stark's theorem).  But in order to establish quantum modularity, Bringmann and Rolen had to examine asymptotic expansion at other roots of unity (not only at zero). In summary, \cite{BR} presumably includes a version of Theorems \ref{thm-partial theta fts} and \ref{thm2} for ``weight" 3/2 partial theta functions twisted with a Dirichlet character. 
\end{remark}

\subsection{Some consequences} Before proving our main results, we show some of their consequences.

Let $\chi$ be a nonprincipal character.
Setting $r=1$ in (\ref{sums-theta-2}), from the summation formula for the geometric series we have
\begin{equation}\label{sums-theta-2-r=1}
\begin{aligned}
G_{2,\chi}(\theta,b,1)
&=\sum_{a=0}^{f_\chi-1}\sum_{m=0}^\infty\chi(mf_\chi+a)e^{-(mf_\chi+a+b)\theta} \\
&=\sum_{a=0}^{f_\chi-1}\chi(a)\sum_{m=0}^\infty e^{-(mf_\chi+a+b)\theta} \\
&=-\frac1{\theta}\sum_{a=0}^{f_\chi-1}\chi(a)e^{-(a+b)\theta}\frac{(-\theta)}{1-e^{-f_\chi\theta}}
\end{aligned}
\end{equation}
 for $|\theta|<2\pi/f_\chi$.
Then applying the generating function for the generalized Bernoulli polynomials (\ref{def-g-B-pol}), the asymptotic expansion of $G_{2,\chi}(\theta,b,1)$
at $\theta=0$ has the form
\begin{equation}\label{sym-gB}
G_{2,\chi}(\theta,b,1)= -\sum_{n=0}^\infty(-1)^n\frac{B_{n+1,\bar\chi}(b)}{(n+1)!}\theta^n,
\end{equation}
 for $|\theta|<2\pi/f_\chi$.

Let $r\in\mathbb{N}$. In the following, we remark that the asymptotic expansion in Theorem \ref{thm-partial theta fts}(ii)
is also established if we replace $b$  by $b+rf_{\chi}$. From  (\ref{sums-theta-2}) and Theorem \ref{thm-partial theta fts}(ii), we have
\begin{equation}\label{con-f-g2-add}
\begin{aligned}
G_{2,\chi}(\theta,b+rf_\chi,r) &=r\sum_{n=0}^\infty\chi(n+f_\chi)e^{-(n+b/r+f_\chi)^r\theta} \\
&=r\sum_{n=0}^\infty\chi(n)e^{-(n+b/r)^r\theta} -r\sum_{m=1}^{f_\chi}\chi(m)e^{-(m+b/r)^r\theta} \\
&= G_{2,\chi}(\theta,b,r)-r\sum_{m=1}^{f_\chi}\chi(m)e^{-(m+b/r)^r\theta} \\
&=-r\sum_{n=0}^{N}(-1)^n\frac{B_{rn+1,\bar\chi}\left(\frac br\right)}{rn+1}\frac{\theta^n}{n!}+O(\theta^{N+1})\\ 
&\quad-r\sum_{m=1}^{f_\chi}\chi(m)e^{-(m+b/r)^r\theta}.
\end{aligned}
\end{equation}
Then from the power series expansion of the function $e^{-(m+b/r)^r\theta}$,
we get
\begin{equation}\label{con-f-g2}
\begin{aligned}
G_{2,\chi}(\theta,b+rf_\chi,r) &=-r\sum_{n=0}^{N}(-1)^n\frac{B_{rn+1,\bar\chi}\left(\frac br\right)}{rn+1}\frac{\theta^n}{n!}+O(\theta^{N+1}) \\
&\quad - r\sum_{n=0}^{\infty}(-1)^n \sum_{m=1}^{f_\chi}\chi(m)\left(m+\frac br\right)^{rn}\frac{\theta^n}{n!}
 \\
&=-r\sum_{n=0}^{N}(-1)^n\biggl[\frac{1}{rn+1}B_{rn+1,\bar\chi}\left(\frac br\right) \\
&\quad\qquad\qquad\qquad +\sum_{m=1}^{f_\chi}\chi(m)\left(m+\frac br\right)^{rn}\biggl]\frac{\theta^n}{n!}+O(\theta^{N+1}).
\end{aligned}
\end{equation}

By the generating function of the generalized Bernoulli polynomials (\ref{def-g-B-pol}), we have
\begin{equation}\label{fox-4}
B_{n,\chi}(x+lf_\chi)-B_{n,\chi}(x)=n\sum_{m=1}^{lf_\chi}\bar\chi(m)(m+x)^{n-1},
\end{equation}
where $n\geq0$ and $l\in\mathbb N.$
Letting $l=1$ and $x=b/r,$ replacing $n$ by $rn+1,$ $\chi$ by $\bar\chi$ in (\ref{fox-4}), then substituting into (\ref{con-f-g2}),
we obtain the following identity
$$G_{2,\chi}(\theta,b+rf_\chi,r) =-r\sum_{n=0}^{N}(-1)^n
\frac{B_{rn+1,\bar\chi}\left(\frac br +f_\chi\right)}{rn+1}\frac{\theta^n}{n!} +O(\theta^{N+1}).$$

\section{Proofs}

Our main technical tool comes from Zagier's treatment of asymptotic expansions for infinite series, which can be found in Section 4 of \cite{Zag}.

First we need a character analogue of the Euler-Maclaurin summation formula due to Berndt \cite{Ber75}.

\begin{theorem}[{\cite[Theorem 4.1]{Ber75}}]\label{ch-E-M sum}
Let $f\in C^{(N+1)}[\alpha,\beta],-\infty<\alpha<\beta<\infty.$ Then
$$\begin{aligned}
\sideset{}{'}\sum_{\alpha\leq m\leq\beta}\chi(m)f(m)&=\chi(-1)\sum_{n=0}^N\frac{(-1)^{n+1}}{(n+1)!}\left(\overline B_{n+1,\bar\chi}(\beta)f^{(n)}(\beta)
-\overline B_{n+1,\bar\chi}(\alpha)f^{(n)}(\alpha) \right) \\
&\quad+\chi(-1)\frac{(-1)^N}{(N+1)!}\int_{\alpha}^{\beta}\overline B_{N+1,\bar\chi}(x)f^{(N+1)}(x) dx,
\end{aligned}$$
where the dash indicates that if $m=\alpha$ or $m=\beta,$ then only $\frac12\chi(\alpha)f(\alpha)$ or $\frac12\chi(\beta)f(\beta)$ is counted, respectively.
\end{theorem}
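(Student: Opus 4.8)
The plan is to establish the identity by induction on $N$, with integration by parts as the only real tool; the substance lies entirely in the base case $N=0$.

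\emph{Base case.} Since $\chi$ is periodic of period $f_\chi$, I would first split the character sum into residue classes,
$$\sideset{}{'}\sum_{\alpha\le m\le\beta}\chi(m)f(m)=\sum_{a=0}^{f_\chi-1}\chi(a)\sideset{}{'}\sum_{(\alpha-a)/f_\chi\le j\le(\beta-a)/f_\chi}f(jf_\chi+a),$$
noting that the inner endpoint $j=(\alpha-a)/f_\chi$ (resp. $(\beta-a)/f_\chi$) is an integer exactly when $m=\alpha$ (resp. $m=\beta$), so the two dash conventions agree. To each inner sum I apply the classical $N=0$ Euler--Maclaurin identity over a possibly non-integer interval,
$$\sideset{}{'}\sum_{A\le j\le B}g(j)=\int_A^B g(t)\,dt+\overline B_1(A)g(A)-\overline B_1(B)g(B)+\int_A^B\overline B_1(t)g'(t)\,dt,$$
with the convention $\overline B_1(k)=0$ for integers $k$; this follows in a couple of lines from $\overline B_1(t)=\{t\}-\tfrac12$ by integrating over each interval between consecutive integers and telescoping.

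Taking $g(t)=f(tf_\chi+a)$ and substituting $u=tf_\chi+a$, each term on the right becomes an integral or boundary value over $[\alpha,\beta]$ in the variable $u$; after multiplying by $\chi(a)$ and summing over $a$, the pure integral term $\int_\alpha^\beta f$ picks up the factor $\sum_{a}\chi(a)=0$ (this uses that $\chi$ is nonprincipal) and drops out. For the surviving terms, the periodicity of $\overline B_1$, the reindexing $m=f_\chi-a$, and $\chi(-m)=\chi(-1)\chi(m)$ give
$$\sum_{a=0}^{f_\chi-1}\chi(a)\,\overline B_1\!\left(\frac{x-a}{f_\chi}\right)=\chi(-1)\sum_{m=0}^{f_\chi-1}\chi(m)\,\overline B_1\!\left(\frac{m+x}{f_\chi}\right)=\chi(-1)\,\overline B_{1,\bar\chi}(x),$$
the last equality being (\ref{def-B-fts}) with $n=1$. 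Collecting the pieces gives exactly the asserted formula for $N=0$.

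\emph{Inductive step.} Assume the identity for $N-1\ge0$; its remainder term is $\chi(-1)\frac{(-1)^{N-1}}{N!}\int_\alpha^\beta\overline B_{N,\bar\chi}(x)f^{(N)}(x)\,dx$. Since $N+1\ge2$, property (\ref{prop-3}) gives $\overline B_{N,\bar\chi}=\frac{1}{N+1}\frac{d}{dx}\overline B_{N+1,\bar\chi}$, and $\overline B_{N+1,\bar\chi}$ is continuous, so a single integration by parts is legitimate:
$$\int_\alpha^\beta\overline B_{N,\bar\chi}(x)f^{(N)}(x)\,dx=\frac{1}{N+1}\Bigl(\bigl[\overline B_{N+1,\bar\chi}(x)f^{(N)}(x)\bigr]_{\alpha}^{\beta}-\int_\alpha^\beta\overline B_{N+1,\bar\chi}(x)f^{(N+1)}(x)\,dx\Bigr).$$
Using $(-1)^{N-1}=(-1)^{N+1}$, the boundary term produced is exactly the $n=N$ summand of the main sum and the new integral is exactly the claimed remainder at level $N$; hence the $(N-1)$-identity becomes the $N$-identity. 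Note that $f\in C^{(N+1)}[\alpha,\beta]$ is precisely what this step consumes.

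\emph{Main obstacle.} The induction is automatic; the only delicate point is the base case — proving the classical $N=0$ formula over an interval with arbitrary endpoints, with the correct half-weights at integer endpoints, and then carrying out the sign- and index-bookkeeping that converts $\sum_a\chi(a)\,\overline B_1(\tfrac{x-a}{f_\chi})$ into $\chi(-1)\,\overline B_{1,\bar\chi}(x)$, which is where the factor $\chi(-1)$ and the convention for $\overline B_{1,\bar\chi}$ at the integers appear. One could alternatively package the base case in distributional language using $\frac{d}{dx}\overline B_{1,\bar\chi}(x)=-\chi(-1)\sum_{k\in\mathbb Z}\chi(k)\,\delta(x-k)$, but the residue-class reduction above keeps everything elementary.
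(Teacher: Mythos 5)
Your proposal is correct; note, however, that the paper itself gives no proof of Theorem \ref{ch-E-M sum} — it is quoted from Berndt \cite[Theorem 4.1]{Ber75} — so the comparison is with the cited source rather than with an argument in this paper. Your route (split the sum into residue classes mod $f_\chi$, apply the classical level-$0$ Euler--Maclaurin formula with arbitrary endpoints to each class, recombine via the reindexing $a\mapsto -m$ to produce $\chi(-1)\overline B_{1,\bar\chi}$ from (\ref{def-B-fts}), then climb by induction using $\frac{d}{dx}\overline B_{n+1,\bar\chi}(x)=(n+1)\overline B_{n,\bar\chi}(x)$ from (\ref{prop-3}) and integration by parts) is elementary and self-contained, whereas Berndt obtains the formula within his machinery of character Bernoulli functions and their Fourier expansions developed alongside the character Poisson summation formula; your argument has the additional merit of using only the periodicity and mean-zero property of $\chi$, not primitivity. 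Your sign and factorial bookkeeping checks out: the boundary term created at step $N$ carries $(-1)^{N-1}/\bigl(N!(N+1)\bigr)=(-1)^{N+1}/(N+1)!$, which is exactly the $n=N$ summand, and the new integral has the stated coefficient $(-1)^N/(N+1)!$. Three points deserve to be made explicit rather than implicit: (a) killing the main term $\int_\alpha^\beta f$ uses $\sum_{a=0}^{f_\chi-1}\chi(a)=0$, i.e.\ that $\chi$ is nonprincipal; this hypothesis is not written in the statement but is genuinely necessary (for the principal character the formula is false without that main term), and it is the case in which the theorem is applied in the paper; (b) when $\alpha$ or $\beta$ is an integer prime to $f_\chi$, the $n=0$ boundary terms depend on the value assigned to $\overline B_{1,\bar\chi}$ at its jump points, and your midpoint convention $\overline B_1(k)=0$ is the one matching Berndt's Fourier-series normalization, so the identity you prove is the intended one — you flag this correctly; (c) in the inductive step with $N=1$ the integrand $\overline B_{1,\bar\chi}$ is only piecewise continuous, so the integration by parts should be justified by noting that $\overline B_{2,\bar\chi}$ is continuous and piecewise polynomial (hence absolutely continuous with a.e.\ derivative $2\overline B_{1,\bar\chi}$), which is slightly more than the bare continuity you invoke.
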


Let  $f:(0,\infty)\to\mathbb C$ be a smooth function which has an asymptotic power series expansion around 0. This means that
\begin{equation}\label{asy-exp}
f(t)=\sum_{n=0}^\infty b_nt^n
\end{equation}
as $t\to 0^+.$ Also, we assume that $f(t)$ and all of its derivatives rapidly decay at infinity, i.e., the function $t^Af^{(n)}(t)$ is
bounded on $\mathbb R_+$ for any $A\in\mathbb R$ and $n\in \mathbb Z^+$ (see \cite{Mao}).

For any $a\geq0,$ we consider the summation
\begin{equation}\label{def-ga}
g_{a,\chi}(t)=\ 
\sum_{m=0}^\infty \chi(m) f((m+a)t)
\end{equation}
and  in the next lemma we prove  that its asymptotic behaviour can be simply described in terms of the coefficients of the expansion (\ref{asy-exp}).

\begin{lemma}\label{lem1}
Suppose that $f$ has the asymptotic expansions (\ref{asy-exp}) and $f$ together with all of its derivatives are of rapid decay at infinity.
Suppose that $\chi$ is a nonprincipal character. If $a\geq 0,$ then for any nonnegative integer $N,$ the function $g_{a,\chi}(t)$ defined in (\ref{def-ga}) has the following asymptotic expansion
$$g_{a,\chi}(t)=-\sum_{n=0}^{N}b_n\frac{B_{n+1,\bar\chi}(a)}{n+1}t^n + O(t^{N+1}),$$ as $t\to0^+.$
\end{lemma}
\begin{proof}
Let $\alpha=0$ and $\beta=f_\chi M$ in Theorem \ref{ch-E-M sum}. Then
\begin{equation}\label{pf-E-M}
\begin{aligned}
\sum_{m=0}^{f_\chi M}\chi(m)f(m)&=\chi(-1)\sum_{n=0}^N\frac{(-1)^{n+1} B_{n+1,\bar\chi}}{(n+1)!}\left(f^{(n)}(f_\chi M)-f^{(n)}(0) \right) \\
&\quad+\chi(-1)\frac{(-1)^N}{(N+1)!}\int_{0}^{f_\chi M}\overline B_{N+1,\bar\chi}(x)f^{(N+1)}(x) dx,
\end{aligned}
\end{equation}
where we have used $\overline B_{n,\bar\chi}(f_\chi M)=\overline B_{n,\bar\chi}(0)=B_{n,\bar\chi}$ (see (\ref{prop-3}) above).
From the assumption,  the function $f$ and each of its derivatives are of rapid decay at infinity, we have $\int_0^\infty\left|f^{(N)}(x)\right|dx$ converges.
Since $\overline B_{N,\bar\chi}(x)$ is periodic and hence bounded, letting $M\to\infty$ in (\ref{pf-E-M}), we get
\begin{equation}\label{E-M-inf}
\begin{aligned}
\sum_{m=0}^{\infty}\chi(m)f(m)&=\chi(-1)\sum_{n=0}^N\frac{(-1)^{n} B_{n+1,\bar\chi}}{(n+1)!}f^{(n)}(0) \\
&\quad+\chi(-1)\frac{(-1)^N}{(N+1)!}\int_{0}^{\infty}\overline B_{N+1,\bar\chi}(x)f^{(N+1)}(x) dx.
\end{aligned}
\end{equation}
Replacing $f(x)$ by $f(tx)$ and then $x$ by $x/t$ with $t>0$ in the above equation, we have \begin{equation}\label{E-M-res}
\begin{aligned}
\sum_{m=0}^{\infty}\chi(m)f(mt)&=\chi(-1)\sum_{n=0}^N\frac{(-1)^{n} B_{n+1,\bar\chi}}{(n+1)!}f^{(n)}(0) t^n \\
&\quad+(-t)^N\chi(-1)\int_{0}^{\infty} \frac{\overline B_{N+1,\bar\chi}(x/t)}{(N+1)!}f^{(N+1)}(x) dx.
\end{aligned}
\end{equation}
From (\ref{asy-exp}), we have $f^{(n)}(0)=n!b_n$, substituting into (\ref{E-M-res}) we get 
\begin{equation}\label{E-M-res-add1}
\begin{aligned}
\sum_{m=0}^{\infty}\chi(m)f(mt)&=\chi(-1)\sum_{n=0}^{N}b_{n}\frac{(-1)^{n} B_{n+1,\bar\chi}}{n+1}t^n \\
&\quad+(-t)^N\chi(-1)\int_{0}^{\infty} \frac{\overline B_{N+1,\bar\chi}(x/t)}{(N+1)!}f^{(N+1)}(x) dx.
\end{aligned}
\end{equation}
From the same reason as the above, the last integral $\int_{0}^{\infty} \frac{\overline B_{N+1,\bar\chi}(x/t)}{(N+1)!}f^{(N+1)}(x) dx$ is bounded as $t\to0^+$ if $N$ is fixed,  so the last term in the above equation  can be denoted by $O(t^N).$
And by noticing that $$B_{n,\bar\chi}(0)=B_{n,\bar\chi}$$ and $$B_{n,\bar\chi}(-x)=(-1)^n\chi(-1)B_{n,\bar\chi}(x),~\textrm{for}~\; n\geq0$$  (see (\ref{prop-3}) above), we have
\begin{equation}
g_{0,\chi}(t)=-\sum_{n=0}^{N}b_n\frac{B_{n+1,\bar\chi}}{n+1}t^n + O(t^{N}).
\end{equation}
Since $N$ is arbitrary, we have
\begin{equation}
\begin{aligned}
g_{0,\chi}(t)&=-\sum_{n=0}^{N+1}b_n\frac{B_{n+1,\bar\chi}}{n+1}t^n + O(t^{N+1})\\
&=-\sum_{n=0}^{N}b_n\frac{B_{n+1,\bar\chi}}{n+1}t^n - b_{N+1}\frac{B_{N+2,\bar\chi}}{N+2}t^{N+1}+O(t^{N+1})\\
&=-\sum_{n=0}^{N}b_n\frac{B_{n+1,\bar\chi}}{n+1}t^n+O(t^{N+1}),
\end{aligned}
\end{equation}
which is the desired expansion in the case when $a=0$.

Now we show that  it is also established for $a>0$. If we write \begin{equation}\label{g-define} g(x)=f((x+a)t)~~\text{with}~~ a>0\end{equation}
and use (\ref{asy-exp}), then we get
\begin{equation}\label{g=f}
g(x)=f((x+a)t)=\sum_{n=0}^\infty b_n(x+a)^nt^n.
\end{equation}
Therefore,
\begin{equation}\label{g-diff}
g^{(j)}(x)=\sum_{n=j}^\infty b_n \binom nj j!(x+a)^{n-j}t^n.
\end{equation}
Putting $x=0$ in (\ref{g-diff}) and multipling both sides of the result equality by $\frac{B_{j+1,\bar\chi}}{(j+1)!}$, we have
\begin{equation}\label{b-diff}
\frac{B_{j+1,\bar\chi}g^{(j)}(0)}{(j+1)!}=\sum_{n=j}^N b_n\binom nj \frac{B_{j+1,\bar\chi}a^{n-j}t^n}{j+1} +O(t^{N+1}).
\end{equation}
Then multiplying  both sides of the above equality by $(-1)^{j}$ and adding from $j=0$ to $N$, we get
\begin{equation}\label{b-diff-rea}
\begin{aligned}
\sum_{j=0}^N\frac{(-1)^jB_{j+1,\bar\chi}g^{(j)}(0)}{(j+1)!}&=\sum_{j=0}^N(-1)^j\sum_{n=j}^N b_n\binom nj \frac{B_{j+1,\bar\chi}a^{n-j}t^n}{j+1} +O(t^{N+1}) \\
&=\sum_{n=0}^N b_n\left[\sum_{j=0}^n(-1)^j\binom nj \frac{B_{j+1,\bar\chi}a^{n-j}}{j+1} \right]t^n +O(t^{N+1}).
\end{aligned}
\end{equation}
Applying (\ref{E-M-inf}) with $f(x)$ replaced by $g(x),$ we have
\begin{equation}\label{E-M-inf-g}
\begin{aligned}
\sum_{m=0}^{\infty}\chi(m)g(m)&=\chi(-1)\sum_{j=0}^N\frac{(-1)^{j} B_{j+1,\bar\chi}}{(j+1)!}g^{(j)}(0) \\
&\quad+\chi(-1)\frac{(-1)^N}{(N+1)!}\int_{0}^{\infty}\overline B_{N+1,\bar\chi}(x)g^{(N+1)}(x) dx.
\end{aligned}
\end{equation}
By (\ref{g-define}),  (\ref{E-M-inf-g}) and (\ref{b-diff-rea}), we get
\begin{equation}\label{f=g-res}
\begin{aligned}
\sum_{m=0}^{\infty}\chi(m)f((m+a)t)
&=\sum_{m=0}^{\infty}\chi(m)g(m)\\
&=\chi(-1)\sum_{j=0}^N\frac{(-1)^{j} B_{j+1,\bar\chi}}{(j+1)!}g^{(j)}(0) \\
&\quad+\chi(-1)\frac{(-1)^N}{(N+1)!}\int_{0}^{\infty}\overline B_{N+1,\bar\chi}(x)g^{(N+1)}(x) dx\\
&=\chi(-1)\sum_{n=0}^N b_n\left[\sum_{j=0}^n(-1)^j\binom nj \frac{B_{j+1,\bar\chi}a^{n-j}}{j+1} \right]t^n  \\
&\quad+\chi(-1)\frac{(-1)^N}{(N+1)!}\int_{0}^{\infty}\overline B_{N+1,\bar\chi}(x)g^{(N+1)}(x) dx \\
&\quad +O(t^{N+1}).
\end{aligned}
\end{equation}
Since from  (\ref{g-define}) we have $g^{(N+1)}(x)=t^{(N+1)}f^{(N+1)}((x+a)t),$
the above equation becomes to
 \begin{equation}\label{f=g-res-add}
\begin{aligned}  
\sum_{m=0}^{\infty}\chi(m)f((m+a)t)
&=\chi(-1)\sum_{n=0}^N b_n\left[\sum_{j=0}^n(-1)^j\binom nj \frac{B_{j+1,\bar\chi}a^{n-j}}{j+1} \right]t^n  \\
&\quad+(-t)^N\chi(-1)\int_{at}^{\infty} \frac{\overline B_{N+1,\bar\chi}(x/t -a)}{(N+1)!}f^{(N+1)}(x) dx \\
&\quad +O(t^{N+1}).
\end{aligned}
\end{equation}
From the same reason as before, the integral $\int_{at}^{\infty} \frac{\overline B_{N+1,\bar\chi}(x/t -a)}{(N+1)!}f^{(N+1)}(x) dx$ is bounded as $t\to0^+$ with $N$ fixed, so the remainder term in the above expansion becomes $O(t^N).$
Using the identity
$$B_{n,\chi}(x)=\sum_{j=0}^n\binom nj B_{j,\chi}x^{n-j},$$
and $B_{0,\chi}=0$ if $\chi\neq\chi^0$ (see \cite[p.~9]{Iwasawa}), we have the expression
\begin{equation}\label{gB-exp}
\begin{aligned}
\sum_{j=0}^n(-1)^j\binom nj \frac{B_{j+1,\bar\chi}a^{n-j}}{j+1}
&=\frac1{n+1}\sum_{j=0}^n(-1)^j\binom{n+1}{j+1}{B_{j+1,\bar\chi}a^{n-j}} \\
&=\frac1{n+1}\sum_{j=-1}^n(-1)^j\binom{n+1}{j+1}{B_{j+1,\bar\chi}a^{n-j}}
 \\
&=\frac1{n+1}\sum_{j=0}^{n+1}(-1)^{j-1}\binom{n+1}{j}{B_{j,\bar\chi}a^{n-j+1}} \\
&=\frac{(-1)^n}{n+1}B_{n+1,\bar\chi}(-a).
\end{aligned}
\end{equation}
Finally, substituting  (\ref{gB-exp}) into (\ref{f=g-res-add}) and noticing that
$$B_{n,\bar\chi}(-x)=(-1)^n\bar\chi(-1)B_{n,\bar\chi}(x),\; n\geq0$$
(see (\ref{prop-3}) above), we obtain the following asymptotic formula $$g_{a,\chi}(t)=-\sum_{n=0}^{N}b_n\frac{B_{n+1,\bar\chi}(a)}{n+1}t^n + O(t^{N}).$$
Since $N$ is arbitrary,
we have  
\begin{equation}
\begin{aligned}
g_{a,\chi}(t)&=-\sum_{n=0}^{N+1}b_n\frac{B_{n+1,\bar\chi}(a)}{n+1}t^n + O(t^{N+1})\\
&=-\sum_{n=0}^{N}b_n\frac{B_{n+1,\bar\chi}(a)}{n+1}t^n 
-b_{N+1}
\frac{B_{N+2,\bar\chi}(a)}{N+2} t^{N+1}+O(t^{N+1})\\
&=-\sum_{n=0}^{N}b_n\frac{B_{n+1,\bar\chi}(a)}{n+1}t^n 
+O(t^{N+1})
\end{aligned}
\end{equation}
which completes our proof.
\end{proof}

\begin{proof}[Proof of Theorem \ref{thm-partial theta fts}.]
Let $r$ be any positive integer and let $\chi$ be a nonprincipal character.
Define \begin{equation}\label{f-define}
f(t)=e^{-t^r}~~\textrm{for}~~t>0.
\end{equation}
It is easy to see that this function is a smooth function and it is of rapid decay at infinity.
From the Taylor expansion
\begin{equation}\label{f-r-exp}
e^{-t^r}=\sum_{n=0}^\infty\frac{(-1)^nt^{rn}}{n!},
\end{equation}
we see that the function $f(t)=e^{-t^r}$ has the expansion form (\ref{asy-exp}) at $t=0$:
\begin{equation*}
f(t)=\sum_{m=0}^\infty b_mt^m
\end{equation*}
with
\begin{equation}\label{f-r-coe}
b_{m}=b_{rn}=\frac{(-1)^n}{n!}\text{ if }r\mid m,\text{ and } b_m=0 \text{ if }r\nmid m,
\end{equation}
for $m\geq0.$

Let $\theta=t^r$ with $r\in\mathbb N.$
From (\ref{sums-theta-2}), (\ref{f-define}), Lemma~\ref{lem1}(ii) and (\ref{f-r-coe}), for any positive integer $N,$ we have
\begin{equation}\label{eq b=r}
\begin{aligned}
G_{2,\chi}(\theta,b,r)&=r\sum_{m=0}^\infty\chi(m)e^{-((m+b/r)t)^r} \\
&=r\sum_{m=0}^\infty\chi(m)f((m+b/r)t)\\
&=-r\sum_{n=0}^{N}b_{rn}
\frac{B_{rn+1,\bar\chi}\left(\frac br\right)}{rn+1}
t^{rn}+O(t^{r(N+1)})\\
&(\text{the remainder term is}~ O(t^{r(N+1)})~\text{in the above}, \\
&\text{since by} ~(\ref{f-r-coe})~  \text{the coefficients of $t^{m}$ equals to 0 if $r\nmid m$)}\\
&=-r\sum_{n=0}^{N}(-1)^n\frac{B_{rn+1,\bar\chi}\left(\frac br\right)}{(rn+1)n!}
t^{rn}+O(t^{r(N+1)})\\
&=-r\sum_{n=0}^{N}(-1)^n\frac{B_{rn+1,\bar\chi}\left(\frac br\right)}{(rn+1)n!}
\theta^{n}+O(\theta^{N+1})
\end{aligned}
\end{equation}
as $\theta\to0^+.$
This completes the proof.
\end{proof}

\begin{proof}[Proof of Theorem \ref{thm2}.]
By separating even and odd terms, we find that
\begin{equation}\label{sums-theta-1-r=1}
\begin{aligned}
G_{1,\chi}(\theta,b,r)=\chi(2)G_{2,\chi}(2^r\theta,b/2,r)-r\sum_{n=0}^\infty\chi(2n+1)e^{-(2n+1+b/r)^r\theta}.
\end{aligned}
\end{equation}
By the definition of $G_{2,\chi}(\theta,b,r)$ (see (\ref{sums-theta-2}) above), we have
\begin{equation}\label{rel 1-2}
\begin{aligned}
G_{2,\chi}(\theta,b,r)-\chi(2)G_{2,\chi}(2\theta,b/2,r)=r\sum_{n=0}^\infty\chi(2n+1)e^{-(2n+1+b/r)^r\theta}.
\end{aligned}
\end{equation}
Substituting the above equality into (\ref{sums-theta-1-r=1}), we get
\begin{equation}\label{rel12-1}
\begin{aligned}
G_{1,\chi}(\theta,b,r)=2\chi(2)G_{2,\chi}(2^r\theta,b/2,r)-G_{2,\chi}(\theta,b,r).
\end{aligned}
\end{equation}
Letting $x=b/r$ with $0<b<r$ in (\ref{b-e-rel}), we find that
\begin{equation}\label{b-e-r-pf}
2^{n+1}\bar\chi(2)B_{n+1,\chi}\left(\frac{b/r}2\right)-B_{n+1,\chi}\left(\frac br\right)=-\frac{n+1}2E_{n,\chi}\left(\frac br\right),
\end{equation}
where we have used the fact that $\overline B_{n,\chi}(x)=B_{n,\chi}(x)$ and $\overline E_{n,\chi}(x)=E_{n,\chi}(x)$
for $0\leq x<1.$
Let $\chi$ be a  Dirichlet character with odd conductor $f_\chi.$
Substituting the result of Theorem \ref{thm-partial theta fts}(ii) into (\ref{rel12-1}), we have 
\begin{equation}\label{rel12-fin-add}
\begin{aligned}
G_{1,\chi}(\theta,b,r)&=r\sum_{n=0}^N\frac{(-1)^n}{rn+1}
\left[B_{rn+1,\bar\chi}\left(\frac br\right)-2^{rn+1}\chi(2)B_{rn+1,\bar\chi}\left(\frac{b/r}2\right) \right]\frac{\theta^n}{n!}\\
&\quad +O(\theta^{N+1}),
\end{aligned}
\end{equation} 
then applying (\ref{b-e-r-pf}), we get
\begin{equation}\label{rel12-fin}
G_{1,\chi}(\theta,b,r)=\frac r2\sum_{n=0}^{N}(-1)^{n}\frac{E_{rn,\bar\chi}\left(\frac br\right)}{n!}\theta^n+O(\theta^{N+1})
\end{equation}
as $\theta\to0^+.$ This completes our proof.
\end{proof}

\section{Connection with certain $L$-series}

Let $C:\mathbb{Z}\rightarrow\mathbb{C}$ be a periodic function with mean value 0 and $L(s,C)=\sum_{n=1}^{\infty}C(n)n^{-s}~~(\textrm{Re}(s)>1)$ be the associated $L$-series.  Lawrence and Zagie~\cite{LZ} proved that $L(s,C)$ can be extended
to the whole complex plane  $\mathbb{C}$, and by using Mellin transformation they also showed that  the two functions $\sum_{n=1}^{\infty} C(n)e^{-nt}$ and $\sum_{n=1}^{\infty} C(n)e^{-n^{2}t}~~(t>0)$ have the asymptotic expansions
$$
\sum_{n=1}^{\infty} C(n)e^{-nt}\sim\sum_{r=0}^{\infty}L(-r,C)\frac{(-t)^{r}}{r!}$$
and
$$\sum_{n=1}^{\infty} C(n)e^{-n^{2}t}\sim \sum_{r=0}^{\infty}L(-2r,C)\frac{(-t)^{r}}{r!}$$
as $t\to 0^+$. Furthermore, the number $L(-r,c)$
are given explicitly by  \begin{equation*}L(-r,C)=-\frac{M^{r}}{r+1}\sum_{n=1}^{M}C(n)B_{r+1}\left(\frac{n}{M}\right)\quad(r=0,1,\dots)\end{equation*}
where $B_{k}(x)$ denotes the $k$th Bernoulli polynomial and $M$ is any period of the function $C(n)$.

Define $$A(q)=\sum_{n\geq1}\chi_{+}(n)q^{(n^2-1)/120},$$ where $\chi_{+}(n)$ is periodic with period 60 and takes
the non-zero value 1 at 1, 11, 19, 29  and $-1$ at 31, 41, 49 and 59 and is 0 for all other $1\leq n\leq60.$
It is a holomorphic function in the unit disk. Let $\zeta$ be a root of unity. By using the above expansions, they showed that the radical limit of $1-\frac{1}{2}A(q)$ as $q$ tends to $\zeta$ equals $W(\zeta)$, the (rescaled) Witten-Reshetikhin-Turaev (WRT) invariant of the Poincar\'e
homology sphere. As pointed out by Lawrence and Zagier~\cite[p.~95]{LZ}, although for a general 3-manifold it is hopeless to give nice formulae for WRT invariants, the  WRT invariant of the Poincar\'e homology sphere is accessible to computations.

In this section, inspired by the works of Lawrence and Zagier~\cite{LZ}, we show  a connection between certain $L$-series and our asymptotic expansions.

Suppose $\epsilon\in\{1,2\}.$
Define the series $G_{\epsilon}(\theta,b,r)$ by
\begin{equation}\label{G-gen}
G_{\epsilon}(\theta,b,r)=r\sum_{n=0}^\infty(-1)^{\epsilon n}e^{-(n+b/r)^r\theta}.
\end{equation}
If $\chi$ is a nonprincipal character with conductor $f_\chi$, then define $G_{\epsilon,\chi}(\theta,b,r)$ by
\begin{equation}\label{G-gen-chi}
G_{\epsilon,\chi}(\theta,b,r)=r\sum_{n=0}^\infty(-1)^{\epsilon n}\chi(n)e^{-(n+b/r)^r\theta}.
\end{equation}
We remark here that $G_{\epsilon,\chi}(\theta,b,r)$ can be expressed in terms of the series $G_{\epsilon}(\theta,b,r).$
If $\chi$ is a character mod $f_\chi,$ then we rearrange the terms in the series for $G_{\epsilon,\chi}(\theta,b,r)$
according to the residue classes mod $f_\chi.$ That is, we write
$$n=mf_\chi+a,\quad\text{where }0\leq a\leq f_\chi-1\text{ and }m=0,1,2,\cdots,$$
and obtain
\begin{equation}\label{G-gen-chi-2}
\begin{aligned}
G_{\epsilon,\chi}(\theta,b,r)
&=r\sum_{a=0}^{f_\chi-1}\sum_{m=0}^\infty(-1)^{\epsilon(mf_\chi+a)}\chi(mf_\chi+a)e^{-(mf_\chi+a+b/r)^r\theta} \\
&=\sum_{a=0}^{f_\chi-1}(-1)^{\epsilon a}\chi(a)G_{\epsilon}\left(\theta f_\chi,\frac{ar+b}{f_\chi},r\right).
\end{aligned}
\end{equation}

Now we define the coefficients $\{\gamma_n\}$ from the asymptotic expansion
\begin{equation}\label{theta-L}
G_{\epsilon,\chi}(\theta,b,r)=r\sum_{n=0}^\infty(-1)^{\epsilon n}\chi(n)e^{-(n+b/r)^r\theta}\sim
\sum_{n=0}^\infty \gamma_n\theta^n \quad\text{as } \theta\to 0^+
\end{equation}
and consider the Mellin transform of $G_{\epsilon,\chi}(\theta,b,r)$:
\begin{equation}\label{theta-L-2}
\begin{aligned}
\int_0^\infty G_{\epsilon,\chi}(\theta,b,r)\theta^{s-1}d\theta&=r\sum_{n=0}^\infty(-1)^{\epsilon n}\chi(n)\int_0^\infty e^{-(n+b/r)^r\theta}\theta^{s-1}d\theta \\
&=r\sum_{n=0}^\infty\frac{(-1)^{\epsilon n}\chi(n)}{(n+b/r)^{rs}}\int_0^\infty e^{-t}t^{s-1}dt \\
&=\Gamma(s)L_{r,\epsilon}(rs,b;\chi).
\end{aligned}
\end{equation}
Here $L_{r,\epsilon}(s,b;\chi)$ is the modified Dirichlet $L$-series defined by
\begin{equation}\label{pm-l}
L_{r,\epsilon}(s,b;\chi)=r\sum_{m=0}^\infty\frac{(-1)^{\epsilon m}\chi(m)}{(m+b/r)^{s}},
\end{equation}
where $\chi$ is a primitive character modulo $f_\chi,b$ is a positive real number, and $r\in\mathbb N.$
On the other hand, we have
\begin{equation}\label{theta-L-3}
\begin{aligned}
\int_0^\infty G_{\epsilon,\chi}(\theta,b,r)\theta^{s-1}d\theta
&=\int_0^\infty\left(\sum_{n=0}^{N-1} \gamma_n\theta^n + O(\theta^N) \right)\theta^{s-1}d\theta \\
&=\sum_{n=0}^{N-1} \frac{\gamma_n}{s+n}+R_N(s),
\end{aligned}
\end{equation}
where $R_N(s)$ is an analytic function for Re$(s)>-N.$
And it is clear that the residue of  $\int_0^\infty G_{\epsilon,\chi}(\theta,b,r)\theta^{s-1}d\theta$ at $s=-n$ is $\gamma_n$ for $n < N$. Since $N$ is arbitrary, by (\ref{theta-L-2}), we have
\begin{equation}\label{res-L}
\gamma_n=\text{res}_{s=-n}\{L_{r,\epsilon}(rs,b;\chi) \Gamma(s) \}=\frac{(-1)^n}{n!}L_{r,\epsilon}(-rn,b;\chi)
\end{equation}
for $n\in\mathbb{N}$.
Therefore, from (\ref{theta-L}) and (\ref{res-L}) we conclude that
\begin{equation}\label{res-L-4}
G_{\epsilon,\chi}(\theta,b,r)\sim\sum_{n=0}^\infty\frac{(-1)^n}{n!} L_{r,\epsilon}(-rn,b;\chi)\theta^n\quad\text{as }\theta\to 0^+,
\end{equation}
where $\epsilon\in\{1,2\}.$
Then comparing the above equality with the results of Theorems \ref{thm-partial theta fts} and \ref{thm2}, we conclude that  $L_{r,\epsilon}(-rn,b;\chi)$ are given explicitly by
\begin{equation}\label{L-ex-vals}
L_{r,\epsilon}(-rn,b;\chi)=\begin{cases}
\frac12 rE_{rn,\bar\chi}\left(\frac br\right) &\text{if } \epsilon=1 \text{ and odd } f_\chi, \\
-r\frac{B_{rn+1,\bar\chi}\left(\frac br\right)}{rn+1} &\text{if } \epsilon=2,
\end{cases}
\end{equation}
where $\chi\neq\chi^0$ is a primitive character modulo $f_\chi,$ $b$ is a real number with $0<b<r$, $n$ is any nonnegative integer, and $r\in\mathbb N.$
Gathering  (\ref{res-L-4}) and (\ref{L-ex-vals}), we have established  the following theorem.

\begin{theorem}\label{res-L-asy}
Let the notations such as  $\epsilon$ be defined as above. Then
the series $G_{\epsilon,\chi}(\theta,b,r)$ have the asymptotic expansions
$$G_{\epsilon,\chi}(\theta,b,r)\sim\sum_{n=0}^\infty\frac{(-1)^n}{n!} L_{r,\epsilon}(-rn,b;\chi)\theta^n\quad\text{as }\theta\to 0^+.$$
Furthermore, the numbers $L_{r,\epsilon}(-rn,b;\chi)$ are given explicitly by
$$L_{r,\epsilon}(-rn,b;\chi)
=\begin{cases}
\frac12 rE_{rn,\bar\chi}\left(\frac br\right) &\text{if } \epsilon=1 \text{ and odd } f_\chi, \\
-r\frac{B_{rn+1,\bar\chi}\left(\frac br\right)}{rn+1} &\text{if } \epsilon=2,
\end{cases}$$
where $\chi\neq\chi^0$ is a primitive character modulo $f_\chi,$ $b$ is a real number with $0<b<r$, $n$ is any nonnegative integer, and $r\in\mathbb N.$
\end{theorem}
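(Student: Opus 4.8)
The plan is to establish Theorem~\ref{res-L-asy} in two stages: first the asymptotic expansion itself, then the explicit evaluation of the special values $L_{r,\epsilon}(-rn,b;\chi)$. Essentially all of the work has already been done in the body of the excerpt, so the proof is a matter of assembling the pieces in the right order. For the asymptotic expansion, I would start from the Mellin transform computation in~(\ref{theta-L-2}), which shows $\int_0^\infty G_{\epsilon,\chi}(\theta,b,r)\theta^{s-1}\,d\theta=\Gamma(s)L_{r,\epsilon}(rs,b;\chi)$, valid for $\mathrm{Re}(s)$ large. The key analytic input is that $G_{\epsilon,\chi}(\theta,b,r)$ decays rapidly as $\theta\to\infty$ (clear from the exponential factors, since $b>0$) and admits some asymptotic power series $\sum_n\gamma_n\theta^n$ as $\theta\to0^+$; the existence of the latter follows from Theorems~\ref{thm-partial theta fts} and~\ref{thm2} in the cases we care about, or more directly from Lemma~\ref{lem1} applied to $f(t)=e^{-t^r}$. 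Splitting the Mellin integral at $\theta=1$ and inserting the partial expansion, as in~(\ref{theta-L-3}), exhibits $\Gamma(s)L_{r,\epsilon}(rs,b;\chi)$ as meromorphic with simple poles at $s=-n$ and residue $\gamma_n$ there. Matching against the known simple pole of $\Gamma(s)$ at $s=-n$ with residue $(-1)^n/n!$ gives~(\ref{res-L}), namely $\gamma_n=\frac{(-1)^n}{n!}L_{r,\epsilon}(-rn,b;\chi)$, which is precisely the claimed expansion~(\ref{res-L-4}).

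For the explicit formula for $L_{r,\epsilon}(-rn,b;\chi)$, I would simply compare the expansion just obtained with the expansions already proved. When $\epsilon=2$ the series $G_{2,\chi}(\theta,b,r)$ in~(\ref{G-gen-chi}) is exactly the partial theta function~(\ref{sums-theta-2}), so Theorem~\ref{thm-partial theta fts}(ii) gives $\gamma_n=-r\,B_{rn+1,\bar\chi}(b/r)/((rn+1)n!)$; equating with $\gamma_n=\frac{(-1)^n}{n!}L_{r,2}(-rn,b;\chi)$ and solving yields $L_{r,2}(-rn,b;\chi)=-r\,B_{rn+1,\bar\chi}(b/r)/(rn+1)$. (One should note the sign bookkeeping: the $(-1)^n$ from $\Gamma$'s residue cancels the $(-1)^n$ in Theorem~\ref{thm-partial theta fts}(ii).) When $\epsilon=1$ and $f_\chi$ is odd, $G_{1,\chi}(\theta,b,r)$ coincides with~(\ref{sums-theta-1}), so Theorem~\ref{thm2}(ii) gives $\gamma_n=\frac r2(-1)^n E_{rn,\bar\chi}(b/r)/n!$, and comparison gives $L_{r,1}(-rn,b;\chi)=\frac r2 E_{rn,\bar\chi}(b/r)$. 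Since Theorems~\ref{thm-partial theta fts}(ii) and~\ref{thm2}(ii) are stated for $0<b<r$ (and the odd-conductor hypothesis is needed in the $\epsilon=1$ case precisely because the generalized Euler functions are only defined then), the stated hypotheses on $b$ and $f_\chi$ are exactly what is required, and the case $b=0$ in parts~(i) of those theorems is recovered as the limit $b\to0^+$ (or handled identically).

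I do not anticipate a serious obstacle. The one point requiring a little care is the justification that the Mellin transform $\Gamma(s)L_{r,\epsilon}(rs,b;\chi)$ really does continue meromorphically with the asserted residues; this is the standard Mellin-transform/asymptotic-expansion correspondence (Zagier's treatment in~\cite[\S4]{Zag}, already cited), and the rapid decay at infinity together with the established power-series expansion at $0$ supply both hypotheses. A second minor point is making sure the identifications $G_{\epsilon,\chi}=G_{1,\chi}$ or $G_{2,\chi}$ are literally correct from the definitions~(\ref{sums-theta-1}), (\ref{sums-theta-2}), (\ref{G-gen-chi}) — they are, since $(-1)^{\epsilon n}=(-1)^n$ for $\epsilon=1$ and $(-1)^{\epsilon n}=1$ for $\epsilon=2$. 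Once those two observations are in place, the theorem follows by direct substitution, so the proof will be short.
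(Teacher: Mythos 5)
Your proposal is correct and is essentially the paper's own argument: the paper likewise derives the expansion from the Mellin transform identity $\int_0^\infty G_{\epsilon,\chi}(\theta,b,r)\theta^{s-1}d\theta=\Gamma(s)L_{r,\epsilon}(rs,b;\chi)$, reads off the residues at $s=-n$ against $\Gamma(s)$ to get $\gamma_n=\frac{(-1)^n}{n!}L_{r,\epsilon}(-rn,b;\chi)$, and then identifies $L_{r,\epsilon}(-rn,b;\chi)$ by comparing with Theorems~\ref{thm-partial theta fts}(ii) and~\ref{thm2}(ii). Your sign bookkeeping and the identification of $G_{\epsilon,\chi}$ with $G_{1,\chi}$, $G_{2,\chi}$ are exactly as in the paper, so no further changes are needed.
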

It is easy to see that
\begin{equation}\label{e-o-rel}
r\sum_{m=0}^\infty\frac{\chi(2m+1)}{(2m+1+b/r)^{s}}
=r\sum_{m=0}^\infty\frac{\chi(m)}{(m+b/r)^{s}}-r\sum_{m=0}^\infty\frac{\chi(2m)}{(2m+b/r)^{s}}.
\end{equation}
This yields, after simplification, (cf. (10.24) on p.~176 of \cite{On})
\begin{equation}\label{expli-L}
\begin{aligned}
L_{r,1}(s,b;\chi)&=r\sum_{m=0}^\infty\frac{(-1)^m\chi(m)}{(m+b/r)^{s}} \\
&=r\sum_{m=0}^\infty\frac{\chi(2m)}{(2m+b/r)^{s}}-r\sum_{m=0}^\infty\frac{\chi(2m+1)}{(2m+1+b/r)^{s}} \\
&=2^{-s+1}\chi(2)r\sum_{m=0}^\infty\frac{\chi(m)}{(m+b/2r)^{s}}
-r\sum_{m=0}^\infty\frac{\chi(m)}{(m+b/r)^{s}}.
\end{aligned}
\end{equation}
We have used (\ref{e-o-rel}) in the last step of the above equation. Then by (\ref{pm-l})  we obtain
\begin{equation}\label{expli-L2}
\begin{aligned}
L_{r,1}(s,b;\chi)&=2^{-s+1}\chi(2)L_{r,2}(s,b/2;\chi)-L_{r,2}(s,b;\chi).
\end{aligned}
\end{equation}
Finally, substituting  $s=-rn$ into (\ref{expli-L2}), and from Theorem \ref{res-L-asy} we have the following result
which is equivalent  to \cite[p. 1209, (3.5)]{CanDa} and (\ref{b-e-rel}).
\begin{corollary}\label{B-K}
Let $\chi,$ etc., be as above. Then
$$E_{rn,\bar\chi}\left(\frac br\right)=\frac1{rn+1}\left(B_{rn+1,\bar\chi}\left(\frac br\right)-2^{rn+1}\chi(2)B_{rn+1,\bar\chi}\left(\frac b{2r}\right)\right),$$
where $\chi\neq\chi^0$ is a primitive character modulo $f_\chi,$ $b$ is a real number with $0<b<r$, $n$ is any nonnegative integer, and $r\in\mathbb N.$
\end{corollary}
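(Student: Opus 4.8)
The plan is to read the corollary straight off the Mellin-transform dictionary assembled in this section; no new analytic input is needed, only a specialization and a substitution of the explicit special values.

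First I would specialize the identity (\ref{expli-L2}) at the nonpositive integer points $s=-rn$. Since $2^{-s+1}=2^{rn+1}$ there, it becomes
$$L_{r,1}(-rn,b;\chi)=2^{rn+1}\chi(2)\,L_{r,2}\!\left(-rn,\tfrac b2;\chi\right)-L_{r,2}(-rn,b;\chi).$$
Next I would replace each of the three special values by its closed form from Theorem \ref{res-L-asy}. Because the conductor $f_\chi$ is odd, the first case of that theorem applies on the left and gives $L_{r,1}(-rn,b;\chi)=\tfrac r2E_{rn,\bar\chi}(b/r)$; the $\epsilon=2$ case gives $L_{r,2}(-rn,b;\chi)=-rB_{rn+1,\bar\chi}(b/r)/(rn+1)$, and—the one substitution to watch—the same $\epsilon=2$ formula applied with $b$ replaced by $b/2$ (legitimate since $0<b/2<r$) gives $L_{r,2}(-rn,b/2;\chi)=-rB_{rn+1,\bar\chi}\!\big(\tfrac{b/2}{r}\big)/(rn+1)=-rB_{rn+1,\bar\chi}(b/(2r))/(rn+1)$, which is exactly the term $B_{rn+1,\bar\chi}(b/(2r))$ that appears in the statement. (One should also note $0<b/(2r)<1$, so that $\overline B_{rn+1,\bar\chi}=B_{rn+1,\bar\chi}$ at that argument.)

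Finally I would substitute these three expressions into the specialized identity, cancel the common factor $r$, collect the numerical constant, and solve for $E_{rn,\bar\chi}(b/r)$; this produces the displayed formula.

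I do not expect any genuine obstacle here: the content is entirely bookkeeping—keeping track of the power of $2$, the factor $\chi(2)$, and the halving of $b$ as it passes into the argument of the generalized Bernoulli polynomial. As the text already observes, the identity so obtained is nothing but (\ref{b-e-rel}) rewritten with $\chi$ replaced by $\bar\chi$ and with $(n,x)$ specialized to $(rn,b/r)$, equivalently \cite[(3.5)]{CanDa}. Thus the corollary is really a consistency check: the $L$-series computation of this section reproduces the same Bernoulli–Euler relation that was used, in the form (\ref{b-e-r-pf}), to deduce Theorem \ref{thm2} from Theorem \ref{thm-partial theta fts}(ii).
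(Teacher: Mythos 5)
Your proposal is exactly the paper's own argument: the paper proves the corollary by setting $s=-rn$ in (\ref{expli-L2}) and inserting the special values $L_{r,1}(-rn,b;\chi)=\tfrac r2E_{rn,\bar\chi}(b/r)$, $L_{r,2}(-rn,b;\chi)=-rB_{rn+1,\bar\chi}(b/r)/(rn+1)$ and $L_{r,2}(-rn,b/2;\chi)=-rB_{rn+1,\bar\chi}(b/(2r))/(rn+1)$ from Theorem \ref{res-L-asy}, precisely as you describe. One caveat on your claim that the bookkeeping ``produces the displayed formula'': carrying it out actually gives
$E_{rn,\bar\chi}\left(\tfrac br\right)=\tfrac{2}{rn+1}\left(B_{rn+1,\bar\chi}\left(\tfrac br\right)-2^{rn+1}\chi(2)B_{rn+1,\bar\chi}\left(\tfrac b{2r}\right)\right)$, in agreement with (\ref{b-e-rel}), so the factor $\tfrac1{rn+1}$ in the corollary as printed is a typo (it should be $\tfrac2{rn+1}$); your method is correct and identical to the paper's, but it yields the corrected constant rather than the one displayed.
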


\section*{Acknowledgement} The authors are enormously grateful to the anonymous referees whose comments and suggestions lead to a large improvement of the paper.

Min-Soo Kim is supported by the National Research Foundation of Korea(NRF) grant funded by the Korea government(MSIT) (No. 2019R1F1A1062499). Su Hu is supported by the Natural Science Foundation of Guangdong Province, China (No. 2020A1515010170).

\bibliography{central}

\end{document}